\documentclass[leqno]{article}
\usepackage{geometry}
\usepackage{graphicx}	
\usepackage{mathtools}
\usepackage{amsfonts,amssymb,mathrsfs,amscd,amsmath,amsthm}
\usepackage{verbatim}
\usepackage{url}

\pagestyle{myheadings}

\def\thtext#1{
  \catcode`@=11
  \gdef\@thmcountersep{. #1}
  \catcode`@=12
}

\def\threst{
  \catcode`@=11
  \gdef\@thmcountersep{.}
  \catcode`@=12
}

 \catcode`@=11
 \def\.{.\spacefactor\@m}
 \catcode`@=12

\theoremstyle{plain}
\newtheorem{thm}{Theorem}[section]
\newtheorem{prop}{Proposition}[section]
\newtheorem{cor}[prop]{Corollary}

\theoremstyle{definition}
\newtheorem{dfn}[prop]{Definition}
\newtheorem{rk}[prop]{Remark}

\newtheorem{notation}[prop]{Notation}

\newcommand{\cM}{\mathcal{M}}
\newcommand{\cP}{\mathcal{P}}
\newcommand{\cR}{\mathcal{R}}

\newcommand{\R}{\mathbb{R}}

\newcommand{\dl}{\delta}
\newcommand{\D}{\Delta}
\newcommand{\e}{\varepsilon}

\renewcommand{\l}{\lambda}
\renewcommand{\r}{\rho}
\newcommand{\s}{\sigma}
\renewcommand{\t}{\tau}
\renewcommand{\v}{\varphi}

\newcommand{\diam}{\operatorname{diam}}
\newcommand{\dis}{\operatorname{dis}}

\newcommand{\opt}{{\operatorname{opt}}}

\newcommand{\0}{\emptyset}

\def\rom#1{\emph{#1}}
\def\({\rom(}
\def\){\rom)}

\renewcommand{\ss}{\subset}
\newcommand{\x}{\times}

\begin{document}
\title{Local Structure of Gromov--Hausdorff Space near Finite Metric Spaces in General Position}
\author{Alexander O. Ivanov, Alexey A. Tuzhilin}
\maketitle

\begin{abstract}
We investigate the local structure of the space $\cM$ consisting of isometry classes of compact metric spaces, endowed with the Gromov--Hausdorff metric. We consider finite metric spaces of the same cardinality and suppose that these spaces are in general position, i.e., all nonzero distances in each of the spaces are distinct, and all triangle inequalities are strict. We show that sufficiently small balls in $\cM$ centered at these spaces and having the same radii are isometric. As consequen\-ces, we prove that the cones over such spaces (with the vertices at one-point space) are isometrical; the isometry group of each sufficiently small ball centered at a general position $n$-points space, $n\ge3$, contains a subgroup isomorphic to the group $S_n$ of permutations of a set containing $n$ points.
\end{abstract}

\section{Introduction}
\markright{\thesection.~Introduction}
Denote by $\cM$ the space of all compact metric spaces (considered up to isometry), endowed with the Gromov--Hausdorff metric. Such $\cM$ is usually called the space of Gromov--Hausdorff. We are interested in isometries related to $\cM$. In~\cite{IvaIliadisTuzIsom} we consider finite metric spaces $M$ such that all nonzero distances between their points are distinct, and all triangle inequalities are strict. Such $M$ we call the spaces in general position. It is shown in~\cite{IvaIliadisTuzIsom} that for any $n$-points space $M$ in general position, each its sufficiently small neighborhood in the space $\cM_n\ss\cM$ consisting of all spaces with at most $n$ points, is isometric to an open subset of $\R^k_\infty$, $k=n(n-1)/2$, where $\R^k_\infty$ denotes the arithmetic space $\R^k$ with metric generated by the norm $\bigl\|(x_1,\ldots,x_k)\bigr\|_\infty=\max_i|x_i|$. In particularly, this implies that the balls in $\cM_n$ with sufficiently small equal radii and centered at spaces in general position are isometric.

In the present paper we enhance this result. Namely, we show that for $n$-points spaces in general position, their round neighborhoods in \textbf{entire $\cM$}, having sufficiently small equal radii, are isometric. We construct these isometries explicitly. As a corollary, we get that the cones over such spaces are isometric to each other as well; here by the cone $CZ$ over a set $Z\ss\cM$ we mean  the set $\{\l X\in\cM:X\in Z,\,\l>0\}\cup\{\D_1\}$, where $\l X$ denotes the metric space obtained from  $X\in\cM$ by multiplying all its distances by $\l$, and $\D_1$ is one-point metric space.

Besides that, we show that small round neighborhoods of $n$-points spaces in general position, $n\ge 3$, have rather rich symmetry group, namely, this group contains a subgroup isomorphic to the group $S_n$ of all permutations of an $n$-points set.

\section{Preliminaries}
\markright{\thesection.~Preliminaries}
Let $X$ be an arbitrary metric space. The distance between its points $x$ and $y$ is denoted by $|xy|$. For any point $x\in X$ and a real number $r>0$ we denote by $U_r(x)$ the open ball of radius $r$ centered at $x$; for each nonempty $A\ss X$ and a real number $r>0$ we put $U_r(A)=\cup_{a\in A}U_r(a)$.

For nonempty $A,\,B\ss X$ let
$$
d_H(A,B)=\inf\bigl\{r>0:A\ss U_r(B)\&B\ss U_r(A)\bigr\}.
$$
The value just introduced is called the \emph{Hausdorff distance between $A$ and $B$}. It is well-known~\cite{BurBurIva} that the Hausdorff distance, being restricted to the set of all nonempty closed bounded subsets of $X$, is a metric.

Let $X$ and $Y$ be metric spaces. A triple $(X',Y',Z)$ consisting of a metric space $Z$, and its subsets $X'$ and $Y'$ isometric $X$ and $Y$, respectively, is called a \emph{realization of the pair $(X,Y)$}. \emph{The Gromov--Hausdorff distance $d_{GH}(X,Y)$ between $X$ and $Y$} is the infimum of real numbers $r$ for which there exist realizations $(X',Y',Z)$ of $(X,Y)$ with $d_H(X',Y')\le r$. It is well-known~\cite{BurBurIva} that the function $d_{GH}$, being restricted to the set $\cM$ of all isometry classes of compact metric spaces, forms a metric.

Recall that a \emph{relation\/} between sets $X$ and $Y$ is a subset of the Cartesian product $X\x Y$. We denote by $\cP(X,Y)$ the set of all nonempty relations between $X$ and $Y$. If $\pi_X\:X\x Y\to X$ and $\pi_Y\:X\x Y\to Y$ are the canonical projections, i.e., $\pi_X(x,y)=x$ and $\pi_Y(x,y)=y$, then the restrictions of them to each relation $\s\in\cP(X,Y)$ will be denoted in the same way.

Let us interpret each relation $\s\in\cP(X,Y)$ as a multivalued mapping, which domain may be less than the entire $X$. Then, similarly with the practice in the Theory of Mappings, for each $x\in X$ it is defined its image $\s(x)=\{y\in Y\mid(x,y)\in\s\}$; for each $y\in Y$ it is defined its preimage $\s^{-1}(y)=\{x\in X\mid(x,y)\in\s\}$; for each $A\ss X$ and $B\ss Y$ we also have $\s(A)=\cup_{x\in A}\s(x)$ and $\s^{-1}(B)=\cup_{y\in B}\s^{-1}(y)$.

A relation $R$ between $X$ and $Y$ is called a \emph{correspondence}, if the restrictions of the canonical projections $\pi_X$ and $\pi_Y$ onto $R$ are surjective. We denote by $\cR(X,Y)$ the set of all correspondences between $X$ and $Y$.

Let $X$ and $Y$ be metric spaces, then for each relation $\s\in\cP(X,Y)$ it is defined its \emph{distortion}
$$
\dis\s=\sup\Bigl\{\bigl||xx'|-|yy'|\bigr|: (x,y)\in\s,\ (x',y')\in\s\Bigr\}.
$$

The next result is well-known.

\begin{prop}[\cite{BurBurIva}]\label{th:GH-metri-and-relations}
For any metric spaces $X$ and $Y$ we have
$$
d_{GH}(X,Y)=\frac12\inf\bigl\{\dis R:R\in\cR(X,Y)\bigr\}.
$$
\end{prop}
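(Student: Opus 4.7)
The plan is to prove the two inequalities
\[
2\,d_{GH}(X,Y) \le \inf\bigl\{\dis R : R \in \cR(X,Y)\bigr\}
\quad\text{and}\quad
\inf\bigl\{\dis R : R \in \cR(X,Y)\bigr\} \le 2\,d_{GH}(X,Y)
\]
separately.

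For the first inequality, I would fix $R \in \cR(X,Y)$, set $r = \tfrac12 \dis R$, and construct a pseudometric $d$ on the disjoint union $Z = X \sqcup Y$ that restricts to the given metrics on $X$ and $Y$ and makes each pair $(x,y) \in R$ satisfy $d(x,y) \le r$. The natural formula is
\[
d(x,y) := \inf\bigl\{|xx'|_X + r + |y'y|_Y : (x',y') \in R\bigr\}
\]
for $x \in X$, $y \in Y$, extended symmetrically. The delicate step is the triangle inequality for chains crossing from one factor to the other and back, e.g. $d(x_1,x_2) \le d(x_1,y) + d(y,x_2)$; this is exactly where the distortion hypothesis enters, via the estimate $\bigl||x'x''| - |y'y''|\bigr| \le 2r$ for $(x',y'),(x'',y'') \in R$. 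Passing to the metric quotient of $(Z,d)$ and using surjectivity of $\pi_X|_R$ and $\pi_Y|_R$ shows that the images of $X$ and $Y$ lie within Hausdorff distance $r$ in the quotient, so $d_{GH}(X,Y) \le r = \tfrac12 \dis R$.

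For the reverse inequality, fix $r > d_{GH}(X,Y)$ and a realization $(X',Y',Z)$ of $(X,Y)$ with $d_H(X',Y') < r$, and let $f \: X \to X'$, $g \: Y \to Y'$ be the ambient isometries. Set
\[
R := \bigl\{(x,y) \in X \x Y : |f(x)\,g(y)|_Z < r\bigr\}.
\]
The Hausdorff bound forces $\pi_X|_R$ and $\pi_Y|_R$ to be surjective, so $R \in \cR(X,Y)$. For any $(x,y),(x',y') \in R$ two triangle inequalities in $Z$ yield
\[
\bigl||xx'| - |yy'|\bigr| = \bigl||f(x)f(x')| - |g(y)g(y')|\bigr| \le |f(x)g(y)| + |f(x')g(y')| < 2r,
\]
hence $\dis R \le 2r$. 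Taking the infimum over such $r$ gives $\inf\dis R \le 2\,d_{GH}(X,Y)$.

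The main obstacle is the triangle inequality for the pseudometric $d$ in the first part, and in particular the cross case, which is precisely where the hypothesis $\dis R = 2r$ is consumed and where the constant $\tfrac12$ in the statement becomes sharp. The remaining details -- nonnegativity, compatibility with the original metrics, quotienting by the kernel of $d$, and passing $r$ through the infimum -- are routine.
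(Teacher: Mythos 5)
Your proposal is correct, and both inequalities are carried out (or sketched) in the standard way: gluing $X$ and $Y$ along a correspondence via the pseudometric $d(x,y)=\inf\{|xx'|+r+|y'y|:(x',y')\in R\}$ for one direction, and extracting the correspondence $\{(x,y):|f(x)g(y)|<r\}$ from a realization for the other; the cross-term triangle inequality and the distortion bound are used exactly where they are needed. The paper itself gives no proof of this proposition --- it is quoted as well known from the cited reference --- and your argument is essentially the proof given there, so there is nothing to reconcile.
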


\begin{dfn}
A correspondence $R\in\cR(X,Y)$ is called \emph{optimal}, if $d_{GH}(X,Y)=\frac12\dis R$. We denote by $\cR_\opt(X,Y)$ the set of all optimal correspondences between $X$ and $Y$.
\end{dfn}

If $X$ and $Y$ are finite metric spaces, then the set $\cR(X,Y)$ is finite, thus, in this case, there exists an optimal correspondence $R\in\cR(X,Y)$.

\begin{prop}[\cite{IvaIliTuz}, \cite{Memoli}, \cite{blog}]\label{prop:optimal-cor}
For any $X,\,Y\in\cM$ we have $\cR_\opt(X,Y)\ne\0$.
\end{prop}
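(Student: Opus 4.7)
The plan is a direct compactness argument in the hyperspace of closed subsets of $X\x Y$. First observe that replacing any correspondence $R$ by its closure $\bar R$ does not change the distortion, because the metrics on $X$ and $Y$ are continuous so the supremum defining $\dis R$ is preserved in the limit; moreover $\bar R\sp R$ implies that $\bar R$ is still a correspondence. Hence one loses nothing by restricting the infimum in Proposition~\ref{th:GH-metri-and-relations} to closed correspondences.

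Next pick a sequence of closed correspondences $R_n\in\cR(X,Y)$ with $\dis R_n\to 2d_{GH}(X,Y)$. Since $X$ and $Y$ are compact, so is $X\x Y$, and therefore the family of nonempty closed subsets of $X\x Y$ is compact in the Hausdorff metric (Blaschke's selection theorem). Passing to a subsequence, assume $R_n\to R$ for some nonempty closed $R\ss X\x Y$. I need to check that $R$ is a correspondence and that $\dis R\le 2d_{GH}(X,Y)$. For the first, given $x\in X$, pick $y_n\in Y$ with $(x,y_n)\in R_n$ by surjectivity of $\pi_X|_{R_n}$, and extract a convergent subsequence $y_{n_k}\to y$ using compactness of $Y$; since $(x,y_{n_k})\in R_{n_k}$ and $R_{n_k}\to R$ in the Hausdorff metric, the limit $(x,y)$ lies in the closed set $R$, so $\pi_X(R)=X$; the argument for $\pi_Y(R)=Y$ is symmetric. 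For the second, fix $(x,y),(x',y')\in R$ and use the other direction of Hausdorff convergence to find $(x_n,y_n),(x_n',y_n')\in R_n$ with $(x_n,y_n)\to(x,y)$ and $(x_n',y_n')\to(x',y')$; continuity of the metrics together with the uniform bound $\bigl||x_nx_n'|-|y_ny_n'|\bigr|\le\dis R_n$ yields $\bigl||xx'|-|yy'|\bigr|\le\liminf_n\dis R_n=2d_{GH}(X,Y)$. Taking the supremum over pairs in $R$ gives $\dis R\le 2d_{GH}(X,Y)$, and the reverse inequality from Proposition~\ref{th:GH-metri-and-relations} forces equality, so $R$ is optimal.

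The only mildly delicate points are the upper semicontinuity of the distortion functional under Hausdorff limits and the preservation of surjectivity of the canonical projections; both follow immediately from the standard two-sided characterisation of Hausdorff convergence in compact metric spaces (every point of the limit set is approached by points of the sets of the sequence, and every limit of such points belongs to the limit set), so no genuine obstacle is expected.
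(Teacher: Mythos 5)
Your proof is correct: the closure step, the Blaschke selection argument in the hyperspace of $X\x Y$, the preservation of surjectivity of the projections under Hausdorff limits, and the upper semicontinuity of the distortion are all handled properly, and together they do produce an optimal correspondence. The paper itself offers no proof of this proposition --- it is quoted from \cite{IvaIliTuz}, \cite{Memoli}, \cite{blog} --- and your compactness argument is essentially the standard one found in those references, so there is nothing to object to.
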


For a metric space $X$, we denote by $\diam X$ its \emph{diameter $\sup\bigl\{|xx'|:x,x'\in X\bigr\}$}. For any real number $\l>0$, we write $\l X$ for the metric space obtained from $X$ by multiplying all its distances by $\l$.

The next result is well-know~\cite{BurBurIva}.

\begin{prop}\label{prop:elem_prop}
For any metric spaces $X$ and $Y$ we have
$$
d_{GH}(\l X,\l Y)=\l\,d_{GH}(X,Y).
$$
\end{prop}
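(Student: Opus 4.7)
The plan is to reduce everything to the distortion formula from Proposition~\ref{th:GH-metri-and-relations} and exploit the fact that rescaling a metric space does not change its underlying set, hence does not change the collection of correspondences. So the only thing that moves is the distortion, and it moves by the expected factor.

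First I would observe that the underlying sets of $X$ and $\l X$ coincide, and similarly for $Y$ and $\l Y$. Consequently $\cR(X,Y)=\cR(\l X,\l Y)$ as sets of subsets of the Cartesian product. Next, for any $R$ in this common set and any $(x,y),(x',y')\in R$, if we write $|xx'|$ and $|yy'|$ for the distances in $X$ and $Y$, then the corresponding distances in $\l X$ and $\l Y$ are $\l|xx'|$ and $\l|yy'|$. Hence
$$
\bigl|\,\l|xx'|-\l|yy'|\,\bigr|=\l\,\bigl|\,|xx'|-|yy'|\,\bigr|,
$$
and taking the supremum over all such pairs yields $\dis_{\l X,\l Y}R=\l\cdot\dis_{X,Y}R$.

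Finally I would apply Proposition~\ref{th:GH-metri-and-relations} twice. Since $\l>0$, the scalar can be pulled out of the infimum:
$$
d_{GH}(\l X,\l Y)=\frac12\inf_{R\in\cR(\l X,\l Y)}\dis_{\l X,\l Y}R=\frac\l2\inf_{R\in\cR(X,Y)}\dis_{X,Y}R=\l\,d_{GH}(X,Y).
$$
There is no real obstacle here; the statement is essentially a bookkeeping check. The only point worth being careful about is the positivity of $\l$, which is needed both to preserve the metric axioms on $\l X$ and $\l Y$ and to legitimately factor $\l$ out of the infimum. Since $\l>0$ is assumed in the statement, the argument is complete.
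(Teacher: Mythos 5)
Your proof is correct. The paper itself gives no argument here---it simply cites this as a well-known fact from the literature---and your derivation via Proposition~\ref{th:GH-metri-and-relations} (same underlying sets, hence the same correspondences; distortion scales by $\l$; the factor $\l>0$ pulls out of the infimum) is exactly the standard argument one would find in the cited reference.
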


To conclude the present section, we introduce a few more necessary notions and notations.

By $\D_1$ we denote one-point metric space. For $Z\ss\cM$ we define the \emph{cone $CZ\ss\cM$ over $Z$} as the set $\{\l X:X\in Z,\,\l>0\}\cup\{\D_1\}$.

For concrete calculations related to estimations of distortions, the following notations turned out to be useful, and they were intensively used in~\cite{IvaTuzSimpDist}.

\begin{notation}
For nonempty subsets $A$ and $B$ of a metric spaces $X$, we put 
$$
|AB|=\inf\bigl\{|ab|:a\in A,\,b\in B\bigr\}\ \ \text{and}\ \ |AB|'=\sup\bigl\{|ab|:a\in A,\,b\in B\bigr\}.
$$
\end{notation}

\section{Local Structure of Gromov--Hausdorff Space near General Position Metric Spaces}
\markright{\thesection.~Local Structure of Gromov--Hausdorff Space}

In the present section we define the spaces in general position and we show that the balls of equal sufficiently small radius and centered at these spaces as isometric. \textbf{Starting from this place, we always suppose that for the finite metric spaces in consideration it is fixed the order of their points}. For convenience, the sets of points of these spaces we denote by $\{1,\ldots,n\}$.

\begin{dfn}
We say that a finite metric space $M$ \emph{is in general position}, or \emph{is a space of general position}, if all its nonzero distances are distinct, and all triangle inequalities are strict.
\end{dfn}

For a metric space $X$, we define the value $\dl(X)$ as the minimum of the following three numbers (here $\inf\0=\infty$):
\begin{flalign*}
\indent&s(X)=\inf\bigl\{|xy|:x\ne y\bigr\},&\\
\indent&e(X)=\inf\Bigl\{\bigl||xy|-|zw|\bigr|:x\ne y,\,z\ne w,\,\{x,y\}\ne\{z,w\}\Bigr\},&\\
\indent&t(X)=\inf\bigl\{|xy|+|yz|-|xz|:x\ne y\ne z\ne x\bigr\}.&
\end{flalign*}
By agreement $\inf\0=\infty$, for $X\in\cM$ each of the inequalities $e(X)>0$, $s(X)>0$, $t(X)>0$, $\dl(X)>0$ implies that the space $X$ is finite. Also, for a finite $X$,
\begin{enumerate}
\item $s(X)>0$ holds always, and $s(X)=\infty$ iff $X$ is one-point metric space;
\item $e(X)>0$ is equivalent to that all nonzero distances in $X$ are distinct; also, $e(X)=\infty$ iff $X$ consists of one or two points;
\item $t(X)>0$ is equivalent to that for each triple of pairwise distinct points from $X$, all triangle inequalities are strict; also, $t(X)=\infty$ iff $X$ consists of one or two points;
\item $\dl(X)>0$ is equivalent to that $X$ is in general position.
\end{enumerate}

\subsection{Canonical Partition}

Metric spaces belonging to sufficiently small neighborhoods of spaces in general position have uniquely determined partitions which we describe in the present section.

\begin{prop}\label{prop:canon_part}
Let $M=\{1,\ldots,n\}$ be a metric space. Then for any $0<\e\le s(M)/2$ and each $X\in\cM$ such that $2d_{GH}(M,X)<\e$, there exists a unique up to numeration by points of $M$ partition $X=\sqcup_{i=1}^nX_i$ possessing the following properties\/\rom:
\begin{enumerate}
\item\label{prop:canon_part:1} $\diam X_i<\e$\rom;
\item\label{prop:canon_part:2} for any $i,j\in M$ and any $x\in X_i$, $x'\in X_j$ \(here the indices $i$ and $j$ may be equal to each other\/\) it holds $\bigl||xx'|-|ij|\bigr|<\e$.
\end{enumerate}
\end{prop}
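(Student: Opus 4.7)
My plan is to construct the required partition from an optimal correspondence between $M$ and $X$, and then to establish uniqueness by showing that any such partition is intrinsically determined by the metric of $X$ and the constant $\e$.

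\emph{Existence.} By Proposition~\ref{prop:optimal-cor} there exists $R\in\cR_\opt(M,X)$, and by Proposition~\ref{th:GH-metri-and-relations} we have $\dis R=2d_{GH}(M,X)<\e$. I would set $X_i=R(i)$ for $i=1,\ldots,n$. Since $R$ is a correspondence, each $X_i$ is nonempty and $\cup_{i=1}^n X_i=X$. Disjointness follows from the choice of $\e$: if $x\in X_i\cap X_j$ with $i\ne j$, then $(i,x),(j,x)\in R$, whence $|ij|=\bigl||ij|-|xx|\bigr|\le\dis R<\e\le s(M)/2$, contradicting $|ij|\ge s(M)$. Properties~(1) and~(2) then follow at once from the definition of distortion: for $x,x'\in X_i$ we get $|xx'|=\bigl||xx'|-|ii|\bigr|\le\dis R<\e$, and for $x\in X_i$, $x'\in X_j$ we get $\bigl||xx'|-|ij|\bigr|\le\dis R<\e$.

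\emph{Uniqueness.} I would prove that any partition of $X$ satisfying (1) and (2) consists exactly of the equivalence classes of the relation ``$|xx'|<\e$'' on $X$. The forward implication is immediate from (1). For the converse, if $x\in X_i$ and $x'\in X_j$ with $i\ne j$, then (2) yields $|xx'|>|ij|-\e\ge s(M)-\e\ge\e$, so points from different parts are separated by distance exceeding $\e$. Since this characterization refers only to the metric of $X$ and to $\e$, any two partitions satisfying (1) and (2) must share the same underlying block decomposition, and therefore differ only by a permutation of the labels $1,\ldots,n$.

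The arguments are essentially routine once the right optimal correspondence is chosen; the only subtle point is the bound $s(M)\ge 2\e$, which is exactly what forces distinct blocks $X_i$ and $X_j$ to be disjoint in the existence part and what forces distances between different blocks to exceed $\e$ in the uniqueness part.
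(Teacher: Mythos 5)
Your proof is correct and follows essentially the same route as the paper: existence via $X_i=R(i)$ for an optimal correspondence $R$ (whose existence and distortion bound come from Propositions~\ref{prop:optimal-cor} and~\ref{th:GH-metri-and-relations}), and uniqueness via the two key inequalities that points in a common block are at distance $<\e$ while points in distinct blocks are at distance $>\e$. Your packaging of the uniqueness step as an intrinsic characterization (the blocks are the classes of the relation $|xx'|<\e$) is just a slightly tidier formulation of the paper's argument that each block of one partition meets exactly one block of the other.
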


\begin{proof}
Choose an arbitrary $R\in\cR_\opt(M,X)$ and put $X_i=R(i)$. If $X_i\cap X_j\ne\0$ for some $i\ne j$, then $\dis R\ge|ij|\ge s(M)$, therefore, $2d_{GH}(M,X)=\dis R>\e$, a contradiction. Thus, $\{X_i\}_{i=1}^n$ is a partition of $X$.

Notice that
$$
\dis R=\max_{i,j}\biggl\{\diam X_i,\,\dis\Bigl[\bigl(\{i\}\x X_i\bigr)\cup\bigl(\{j\}\x X_j\bigr)\Bigr],\,j\ne i\biggr\}<\e,
$$
therefore, $\diam X_i<\e$ for all $i$, and $\bigl||xx'|-|ij|\bigr|<\e$ for all $j\ne i$. If $i=j$, then $\bigl||xx'|-|ij|\bigr|=|xx'|\le\diam X_i<\e$, so the item~(\ref{prop:canon_part:2}) holds for all $i,j\in M$, even in the case $i=j$.

Let $X=\sqcup_{i=1}^nY_i$ be another partition of this type. Notice that no one $Y_i$ can intersect $X_j$ and $X_k$ simultaneously for $j\ne k$, because $\diam Y_i<\e$ and for each $x_j\in X_j$, $x_k\in X_k$ it holds $|x_jx_k|>|jk|-\e\ge s(M)-\e\ge\e$. Thus, each $Y_i$ intersects exactly one $X_j$. Similarly, each $X_j$ intersects exactly one $Y_i$. Since $\{X_i\}$ and $\{Y_i\}$ are both partitions of the set $X$, then for each $X_i$, $Y_j$, $X_i\cap Y_j\ne\0$, we have $Y_j=X_i$.
\end{proof}

\begin{rk}\label{rk:canon_part_small}
Proposition~\ref{prop:canon_part} describes one-points spaces $M=\D_1$ as well. In this case we have $s(M)=\infty$, so here we do not have upper restrictions on $\e$ (moreover, $\e$ can be equal to $\infty$). Since $2d_{GH}(M,X)<\e$, then $\diam X<\e$. The partition  $\{X_i\}$ consists of exactly one element $X_1=X$. The item~(\ref{prop:canon_part:1}) is satisfied as well by the reasons discussed above. The item~(\ref{prop:canon_part:2}) is regarded as valid because its condition $i\ne j$ cannot appear in this case.
\end{rk}

\begin{rk}\label{rk:canon_part}
Notice that the item~(\ref{prop:canon_part:2}) of Proposition~\ref{prop:canon_part} implies $\diam X_i\le\e$, that is weaker than the item~(\ref{prop:canon_part:1}).
\end{rk}

Let us present one more useful version of Proposition~\ref{prop:canon_part}.

\begin{prop}\label{prop:canon_part_variant}
Let $M=\{1,\ldots,n\}$ be a metric space. Then for any $0<\e\le s(M)/2$, any $X\in\cM$, $2d_{GH}(M,X)<\e$, and each $R\in\cR_\opt(M,X)$ the family $\{R(i)\}_{i=1}^n$ is a partition of the set $X$ satisfying the following properties\/\rom:
\begin{enumerate}
\item $\diam R(i)<\e$\rom;
\item for any $i,j\in M$, $x\in R(i)$, $x'\in R(j)$ it holds $\bigl||xx'|-|ij|\bigr|<\e$.
\end{enumerate}
Moreover, if $R'$ is another optimal correspondence between $M$ and $X$, then the partitions $\{R(i)\}_{i=1}^n$ and $\{R'(i)\}_{i=1}^n$ may differ from each other only by numerations generated by the correspondences $i\mapsto R(i)$ and $i\mapsto R'(i)$.
\end{prop}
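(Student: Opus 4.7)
The plan is to observe that this proposition is essentially a bookkeeping restatement of Proposition~\ref{prop:canon_part}, where we make explicit the fact that the partition constructed in the proof of Proposition~\ref{prop:canon_part} can be obtained from \emph{any} optimal correspondence, not just one we picked. Both parts will follow from the argument already in Proposition~\ref{prop:canon_part}, combined with its uniqueness clause.

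First I would establish that $\{R(i)\}_{i=1}^n$ is a partition. Since $R\in\cR_\opt(M,X)$, the projection $\pi_Y\:R\to X$ is surjective, so $\bigcup_i R(i)=X$. For disjointness, I would repeat the opening argument from the proof of Proposition~\ref{prop:canon_part}: if $x\in R(i)\cap R(j)$ for some $i\ne j$, then $(i,x),(j,x)\in R$, so $\dis R\ge\bigl||ij|-|xx|\bigr|=|ij|\ge s(M)\ge 2\e$, contradicting $\dis R=2d_{GH}(M,X)<\e$. Properties~(1) and~(2) are then read off directly from $\dis R<\e$ exactly as in Proposition~\ref{prop:canon_part}: for $x,x'\in R(i)$ we have $\diam R(i)\le\dis R<\e$, and for $x\in R(i)$, $x'\in R(j)$ we have $\bigl||xx'|-|ij|\bigr|\le\dis R<\e$ (with the case $i=j$ absorbed by property~(1)).

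For the \textbf{moreover} part, I would apply the uniqueness clause of Proposition~\ref{prop:canon_part}. Given two optimal correspondences $R$ and $R'$, the partitions $\{R(i)\}_{i=1}^n$ and $\{R'(i)\}_{i=1}^n$ both satisfy the hypotheses of Proposition~\ref{prop:canon_part}, hence must coincide up to renumbering. Concretely, there is a permutation $\s$ of $\{1,\ldots,n\}$ such that $R'(i)=R(\s(i))$ for every $i$, which is exactly the statement that the two partitions differ only by the numerations $i\mapsto R(i)$ and $i\mapsto R'(i)$.

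There is really no substantive obstacle here; the main point is that the argument inside the proof of Proposition~\ref{prop:canon_part} never used any special feature of the chosen optimal correspondence beyond $\dis R<\e$, so it applies verbatim to every $R\in\cR_\opt(M,X)$, and uniqueness of the canonical partition then forces the relabeling statement. The only minor care needed is to handle the degenerate case $M=\D_1$ (cf.\ Remark~\ref{rk:canon_part_small}) where the single class $R(1)=X$ trivially satisfies both properties, which should be acknowledged briefly.
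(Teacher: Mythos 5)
Your proposal is correct and matches the paper's intent exactly: the paper gives no separate proof for this proposition precisely because the proof of Proposition~\ref{prop:canon_part} already starts from an arbitrary $R\in\cR_\opt(M,X)$ and sets $X_i=R(i)$, and the ``moreover'' clause is just the uniqueness statement of that proposition applied to the two partitions $\{R(i)\}$ and $\{R'(i)\}$. Nothing to add.
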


\begin{dfn}
The family $\{X_i\}$ from Proposition~\ref{prop:canon_part} we call the \emph{canonical partition of the space $X$ with respect to $M$}.
\end{dfn}

\begin{rk}\label{rk:nonuniqueness_canonic_part}
For $\#M=1$ the canonical partition of the space $X$ consists of exactly one element, and this element is the space $X$ itself. If $\#M=2$ and the elements of the canonical partition $X=A\sqcup B$ are nonisometric, then there always exist two different numerations of this partition by the points of the space $M$: $X_1=A$, $X_2=B$ and $X_1=B$, $X_2=A$. For spaces with $\#M>2$, $e(M)>0$, and for sufficiently small $\e$ such numerations are unique, see Proposition~\ref{prop:CanonPartGeneralSpace} below.
\end{rk}

\begin{prop}\label{prop:optimalRelationPartition}
Let $M=\{1,\ldots,n\}$ be a metric space. Choose an arbitrary $0<\e\le s(M)/4$, any $X,Y\in\cM$, $2d_{GH}(M,X)<\e$, $2d_{GH}(M,Y)<\e$, and let $\{X_i\}$, $\{Y_i\}$ denote the canonical partitions of $X$ and $Y$, respectively, w.r.t. $M$. Then for each $R\in\cR_\opt(X,Y)$ there exists a bijection $\psi\:M\to M$ such that for some $R_i\in\cR(X_i,Y_{\psi(i)})$ it holds $R=\sqcup_{i=1}^nR_i$.
\end{prop}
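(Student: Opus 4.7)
\medskip

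The strategy I would use is to show that an optimal $R$ cannot split any single block of $X$ across several blocks of $Y$: each $X_i$ must be sent by $R$ into a single $Y_j$, and that assignment defines the required bijection $\psi$.

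First, I would use the triangle inequality for $d_{GH}$ to bound the distortion of $R$. From $2d_{GH}(M,X)<\e$ and $2d_{GH}(M,Y)<\e$ we get $d_{GH}(X,Y)<\e$, and optimality of $R$ together with Proposition~\ref{th:GH-metri-and-relations} gives $\dis R=2d_{GH}(X,Y)<2\e$.

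Next comes the separation argument. Suppose $(x,y),(x',y')\in R$ with $x,x'\in X_i$ but $y\in Y_j$, $y'\in Y_{j'}$ for some $j\ne j'$. Property~(\ref{prop:canon_part:1}) of Proposition~\ref{prop:canon_part} applied to the canonical partition of $X$ gives $|xx'|<\e$, while property~(\ref{prop:canon_part:2}) applied to $Y$ gives $|yy'|>|jj'|-\e\ge s(M)-\e\ge 3\e$, where the last inequality uses $\e\le s(M)/4$ (this is precisely where the constant $1/4$, rather than the $1/2$ of the earlier propositions, is needed). Hence $\bigl||xx'|-|yy'|\bigr|>2\e$, contradicting the bound on $\dis R$. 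Therefore for each $i$ there is a unique $\psi(i)\in M$ with $R(X_i)\ss Y_{\psi(i)}$.

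Applying the same argument to $R^{-1}\in\cR_\opt(Y,X)$ yields a map $\psi'\:M\to M$ with $R^{-1}(Y_j)\ss X_{\psi'(j)}$. Surjectivity of the two projections of $R$ combined with the disjointness of the canonical partitions then forces $\psi'\circ\psi$ and $\psi\circ\psi'$ to be the identity, so $\psi$ is a bijection. Finally, setting $R_i=R\cap(X_i\x Y_{\psi(i)})$, the inclusions $R(X_i)\ss Y_{\psi(i)}$ and $R^{-1}(Y_{\psi(i)})\ss X_i$ together with the correspondence property of $R$ imply $R_i\in\cR(X_i,Y_{\psi(i)})$ and $R=\sqcup_{i=1}^nR_i$. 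I do not expect a serious obstacle; the only delicate point is the constant $s(M)/4$, chosen exactly so that the gap $s(M)-2\e$ strictly exceeds the distortion bound $2\e$ coming from the triangle inequality.
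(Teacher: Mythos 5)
Your proof is correct and follows essentially the same route as the paper's: bound $\dis R$ by $2\e$, then run the separation argument showing no block $X_i$ can be split across two blocks $Y_j$, $Y_{j'}$, and symmetrize. The only cosmetic difference is that the paper obtains the distortion bound by comparing $R$ with the explicit correspondence $R'=\sqcup_i X_i\x Y_i$, whereas you use the triangle inequality for $d_{GH}$ together with optimality of $R$; both are valid.
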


\begin{proof}
Let us estimate $\dis R$. To do that, consider $R'=\sqcup_{i=1}^nX_i\x Y_i$. Then, by Proposi\-tion~\ref{prop:canon_part}, we have
$$
\dis R\le\dis R'=\max\{\diam X_i,\,\diam Y_i,\,\bigl||X_iX_j|'-|Y_iY_j|\bigr|,\,\bigl||X_iX_j|-|Y_iY_j|'\bigr|,\,i\ne j\}\le2\e.
$$

Choose arbitrary $x,x'\in X_i$, $y\in R(x)$, $y'\in R(x')$, and show that $y$ and $y'$ belong to the same $Y_j$. Suppose otherwise, then for some $j\ne k$ we have $y\in Y_j$, $y'\in Y_k$, thus, by Proposition~\ref{prop:canon_part}, $|xx'|<\e$, $|yy'|>|jk|-\e\ge s(M)-\e\ge3\e$, so $\dis R\ge\bigl||xx'|-|yy'|\bigr|>2\e$, a contradiction.

Swapping $X$ and $Y$, we get that for any $(x,y),\,(x',y')\in R$ the points $x$, $x'$ belong to the same element of the canonical partition $\{X_i\}$ iff the points $y$, $y'$ belong to the same element of the canonical partition $\{Y_i\}$.
\end{proof}

\begin{prop}\label{prop:metric-change}
Let $M=\{1,\ldots,n\}$ and $N=\{1,\ldots,n\}$ be metric spaces. The distance between $i$ and $j$ in $M$ we denote by $|ij|_M$, and the distance between $i$ and $j$ in $N$ by $|ij|_N$. Suppose that $t(N)>0$, choose an arbitrary $0<\e\le\min\{s(M)/2,2s(N)/3,t(N)/3\}$, any $X\in\cM$, $2d_{GH}(M,X)<\e$, and let $\{X_i\}$ be the canonical partition of $X$ w.r.t. $M$. Define a function $\r$ on $X\x X$ as follows\/\rom: for $x\in X_i$ and $x'\in X_j$ we put $\r(x,x')=|xx'|-|ij|_M+|ij|_N$ \(here the indices $i$ and $j$ may be equal to each other\/\). Then $\r$ is a metric on $X$ coinciding with the initial metrics on each $X_i$. Moreover, if $V$ denotes the set $X$ with the metric $\r$, then $V\in\cM$ and $d_{GH}(N,V)\le d_{GH}(M,X)$.
\end{prop}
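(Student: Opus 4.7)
First I would verify the elementary properties of $\r$. Symmetry is immediate, and the choice $i=j$ forces $|ij|_M=|ij|_N=0$, so $\r$ restricts to $|\cdot|$ on each $X_i$, giving both $\r(x,x)=0$ and the coincidence claim on the pieces. For strict positivity across different pieces ($i\ne j$), I would combine the canonical partition estimate $|xx'|>|ij|_M-\e$ from Proposition~\ref{prop:canon_part} with $|ij|_N\ge s(N)\ge 3\e/2$ (granted by $\e\le 2s(N)/3$) to conclude $\r(x,x')>|ij|_N-\e\ge\e/2>0$.

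The substantive work is the triangle inequality $\r(x,z)\le\r(x,y)+\r(y,z)$ for $x\in X_i$, $y\in X_j$, $z\in X_k$, which I would split by the pattern of coincidences among $i,j,k$. If $i=j=k$, the inequality is just the $X$-triangle inequality (since $\r=|\cdot|$ on $X_i$). If exactly two indices coincide, the ``parallel'' cases $i=j\ne k$ and $j=k\ne i$ again reduce to the $X$-triangle inequality once the $M$- and $N$-corrections cancel; the anomalous subcase is $i=k\ne j$, where the left-hand side has no correction but the right has two. Here I would bound $|xz|\le\diam X_i<\e$ and $\r(x,y)+\r(y,z)>2|ij|_N-2\e\ge 2s(N)-2\e\ge\e$. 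The truly new case is $i,j,k$ pairwise distinct, where the required inequality rearranges to
$$
\bigl(|xy|+|yz|-|xz|\bigr)-\bigl(|ij|_M+|jk|_M-|ik|_M\bigr)+\bigl(|ij|_N+|jk|_N-|ik|_N\bigr)\ge 0.
$$
Applying three canonical partition estimates bounds the first two terms together from below by $-3\e$, while the third term is at least $t(N)\ge 3\e$. This is the one place where $t(N)>0$ and the precise hypothesis $\e\le t(N)/3$ are essential: the triangle slack of $N$ must absorb the cumulative $X$-vs-$M$ approximation error.

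For $V\in\cM$, I would note that each piece $X_i$ is clopen in $X$ (distinct pieces are separated by distance $>\e$), hence compact, and that $\r$ agrees with $|\cdot|$ on each piece, so any sequence in $V$ has infinitely many terms in some fixed $X_{i_0}$ and therefore a $\r$-convergent subsequence. For the Gromov--Hausdorff estimate I would pick $R\in\cR_\opt(M,X)$ realising the given canonical partition as $R(i)=X_i$, so that $R=\sqcup_{i=1}^n\{i\}\x X_i$ has $\dis R=2d_{GH}(M,X)$. The same underlying subset $S\ss N\x V$ is a correspondence between $N$ and $V$, and the identity $|ij|_N-\r(x,x')=|ij|_M-|xx'|$ for $x\in X_i$, $x'\in X_j$ gives $\dis S=\dis R$; Proposition~\ref{th:GH-metri-and-relations} then yields the bound $d_{GH}(N,V)\le d_{GH}(M,X)$. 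The main obstacle I anticipate is the all-distinct triangle case: balancing three independent $\e$-errors against the single triangle slack $t(N)$ is exactly what pins down the constant $t(N)/3$ in the hypothesis.
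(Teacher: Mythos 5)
Your proposal is correct and follows essentially the same route as the paper: the same case analysis for the triangle inequality (with the $i=k\ne j$ subcase absorbed by $2s(N)\ge3\e$ and the pairwise-distinct case by $t(N)\ge3\e$ against a cumulative $-3\e$ error), compactness via compactness of the clopen pieces, and the distortion-preserving reinterpretation of an optimal correspondence $R\in\cR_\opt(M,X)$ with $R(i)=X_i$ as a correspondence between $N$ and $V$. No gaps.
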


\begin{proof}
If $M=\D_1$, then $N=\D_1$, the function $\r$ coincides with the initial metric on $X_1=X$, so $V$ and $X$ are equal metric spaces, thus $d_{GH}(N,V)\le d_{GH}(M,X)$.

Now, let  $\#M\ge2$. Since, by Proposition~\ref{prop:canon_part}, for $i\ne j$, $x\in X_i$, $x'\in X_j$ it holds $|xx'|-|ij|_M+|ij|_N>-\e+s(N)\ge-\e+3\e/2>0$, then the function $\r$ is positively defined. It remains to verify that $\r$ satisfies the triangle inequality.

Let $x\in X_i$, $x'\in X_j$, $x''\in X_k$ be arbitrary points of the space $X$. We shall prove that $\r(x,x'')\le\r(x,x')+\r(x',x'')$, i.e., in explicit form,
\begin{equation}\label{eq:triangle-rule}
|xx'|+|x'x''|-|xx''|-|ij|_M-|jk|_M+|ik|_M+|ij|_N+|jk|_N-|ik|_N\ge0.
\end{equation}

Suppose first that $i=j=k$, then
\begin{multline*}
|xx'|+|x'x''|-|xx''|-|ij|_M-|jk|_M+|ik|_M+|ij|_N+|jk|_N-|ik|_N=\\ =|xx'|+|x'x''|-|xx''|\ge0.
\end{multline*}

Further, let exactly two from the three indices $i$, $j$, $k$ equal to each other. It suffices to analyze two cases: $i=j$ and $i=k$.

If $i=j$, then
\begin{multline*}
|xx'|+|x'x''|-|xx''|-|ij|_M-|jk|_M+|ik|_M+|ij|_N+|jk|_N-|ik|_N=\\ =|xx'|+|x'x''|-|xx''|-|ik|_M+|ik|_M+|ik|_N-|ik|_N=|xx'|+|x'x''|-|xx''|\ge0.
\end{multline*}

If $i=k$, then, taking into account that $x_k\in X_i$, we get
\begin{multline*}
|xx'|+|x'x''|-|xx''|-|ij|_M-|jk|_M+|ik|_M+|ij|_N+|jk|_N-|ik|_N=\\ =\bigl(|xx'|-|ij|_M\bigr)+\bigl(|x'x''|-|ij|_M\bigr)-|xx''|+2|ij|_N.
\end{multline*}
By Proposition~\ref{prop:canon_part}, we have $|xx'|-|ij|_M>-\e$, $|x'x''|-|ij|_M>-\e$, $-|xx''|\ge-\diam X_i>-\e$. On the other hand, $2|ij|_N\ge2s(N)\ge3\e$, that implies inequality~(\ref{eq:triangle-rule}).

At last, suppose that $n\ge 3$ and $i\ne j\ne k\ne i$. For convenience, let us denote the three chosen points of the space $X$ by $x_i\in X_i$, $x_j\in X_j$, and $x_k\in X_k$. By Proposition~\ref{prop:canon_part}, for each pair $\{p,q\}\ss\{i,j,k\}$ we have $|x_px_q|-|pq|_M>-\e$ and $-|x_px_p|+|pq|_M>-\e$, so
$$
|x_ix_j|+|x_jx_k|-|x_ix_k|-|ij|_M-|jk|_M+|ik|_M>-3\e.
$$
On the other hand, $|ij|_N+|jk|_N-|ik|_N\ge t(N)\ge3\e$, therefore, the inequality~(\ref{eq:triangle-rule}) holds.

Compactness of $V$ follows from compactness of the components $X_i$ (these components are compact because the distance between them is nonzero, thus, they are closed subsets of $X$).

Estimate $d_{GH}(N,V)$. Let $R\in\cR_\opt(M,X)$ be the same as in the proof of Proposition~\ref{prop:canon_part}, i.e., $X_i=R(i)$. Denote by $R_V$ the same correspondence $R$, but now considered as an element of $\cR(N,V)$. Let $(i,x),\,(j,x')\in R_V$, then for $i\ne j$ we have
$$
\bigl||ij|_N-\r(x,x')\bigr|=\bigl||ij|_N-|xx'|+|ij|_M-|ij|_N\bigr|=\bigl||ij|_M-|xx'|\bigr|,
$$
therefore, $2d_{GH}(N,V)\le\dis R_V=\dis R=2d_{GH}(M,X)$.
\end{proof}

The construction described in Proposition~\ref{prop:metric-change}, namely, the transformation of a metric space $X$ into a metric space $V$, will be used for realization of local isometry in the Gromov--Hausdorff space. However, word-by-word use of this transformation does not provide us with desirable result. The reason is that this construction depends on the numeration of the canonical partition elements by the elements of the space $M$, however, this numeration is not unique. One of the corresponding examples was described in Remark~\ref{rk:nonuniqueness_canonic_part}. Another example can be easily obtain if we take as $M$ a metric space with nontrivial symmetry group, for example, an $M$ such that all nonzero distances between its points are equal to each other.

Nevertheless, when $M$ consists of at most two points, this nonuniqueness is not essential. Thus, the next discussion has two parts. We start with the case when $M$ consists of one or two points, and we finish with the case $\#M\ge3$, where we impose on $M$ some additional restrictions like $e(M)>0$, which, as it was mentioned in Remark~\ref{rk:nonuniqueness_canonic_part}, guarantee uniqueness of the numeration of canonical partition elements (for sufficiently small $\e$).

\subsection{The Case of Spaces Consisting of One or Two Points}\label{subsec:small}

Let us start with $M=\D_1$. Then, under the notations from Proposition~\ref{prop:metric-change}, we have $N=\D_1=M$. By Remark~\ref{rk:canon_part_small}, there is no upper restrictions on $\e$, and the canonical partition of the space $X$ consists of just one element, namely, $X_1=X$. Then, the function $\r$ coincides with the initial metric on $X$, and, thus, the metric space $V$ is equal to $X$. Therefore, the mapping $X\to V$ is uniquely defined and is equal to the identity map of $U_\e(\D_1)$ onto itself. We denote this mapping by $D_{M,N,\e}$.

Now, suppose that $M=\{1,2\}$, $N=\{1,2\}$, and $0<\e\le\min\{s(M)/2,2s(N)/3\}$. Since $t(N)=\infty$, then the restrictions on $\e$ are exactly the same as in Proposition~\ref{prop:metric-change}. As it was mentioned in Remark~\ref{rk:nonuniqueness_canonic_part}, the canonical partition $\{A,B\}$ of the space $X$ has two numerations. However, for each of these numerations the space $V\in\cM$ from Proposition~\ref{prop:metric-change} is the same. Thus, we constructed a mapping $D_{M,N,\e}\:U_\e(M)\to U_\e(N)$.

Enforce the restrictions on $\e$ in such a way that Proposition~\ref{prop:optimalRelationPartition} can be useful, namely, let us demand that $0<\e\le\min\{s(M)/4,2s(N)/3\}$. Choose arbitrary $X,Y\in U_\e(M)$ and put $V=D_{M,N,\e}(X)$, $W=D_{M,N,\e}(Y)$. Let $\{X_1,X_2\}$ and $\{Y_1,Y_2\}$ be the canonical partitions of $X$ and $Y$, respectively, w.r.t. $M$, and let $R\in\cR_\opt(X,Y)$. By Proposition~\ref{prop:optimalRelationPartition}, there exists a bijection $\psi\:M\to M$ such that for some $R_i\in\cR(X_i,Y_{\psi(i)})$ it holds $R=R_1\sqcup R_2$. Therefore,
$$
2d_{GH}(X,Y)=\max\{\dis R_1,\,\dis R_2,\,\bigl||x_1x_2|-|y_1y_2|\bigr|:(x_1,y_1)\in R_1,\,(x_2,y_2)\in R_2\}.
$$
Since $V$ and $W$ differ from $X$ and $Y$ by the distances functions only, the $R$ can be also considered as an element from $\cR(V,W)$. Since the transition to $V$ and $W$ preserves the metrics on $X_i$ and $Y_i$, then all $\dis R_i$ remain fixed. Besides that, the distances $|x_1x_2|$ and $|y_1y_2|$ were changed by the same value $|12|_N-|12|_M$, so the values $\bigl||x_1x_2|-|y_1y_2|\bigr|$ remain fixed as well. This implies that for the $R$ considered as an element from $\cR(V,W)$, its distortion is the same as of the initial $R$, therefore, $d_{GH}(V,W)\le d_{GH}(X,Y)$.

Now, let us ``symmetrize'' the condition on $\e$, i.e., we demand that
$$
0<\e\le\min\{s(M)/4,s(N)/4\}.
$$
Then we get well-defined mapping $D_{N,M,\e}\:U_\e(N)\to U_\e(M)$ which is obviously coincides with the inverse to $D_{M,N,\e}$. Therefore, $d_{GH}(X,Y)\le d_{GH}(V,W)$, thus, we have proven the following result.

\begin{prop}\label{prop:local_isometry_gen_pos_small}
Let $M=\{1,\ldots,n\}$ and $N=\{1,\ldots,n\}$ be metric spaces, $n\le2$, and $0<\e\le\min\{s(M)/4,e(M)/4\}$. Then the mapping $D_{M,N,\e}\:U_\e(M)\to U_\e(N)$ is an isometry.
\end{prop}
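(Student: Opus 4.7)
The statement essentially consolidates the constructions carried out in the paragraphs immediately preceding it, so the plan is to verify that everything already done does indeed assemble into an isometry.

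The case $n=1$ is trivial: $M = N = \D_1$, and $D_{M,N,\e}$ is by definition the identity of $U_\e(\D_1)$ onto itself. So I restrict attention to $n=2$. Well-definedness of $D_{M,N,\e} : U_\e(M) \to U_\e(N)$ on objects has already been explained: Proposition~\ref{prop:metric-change} produces $V \in \cM$ from any $X \in U_\e(M)$, and the two possible numerations of the canonical partition $\{A, B\}$ of $X$ give the same $V$, because the only between-class distances all shift by the same symmetric constant $|12|_N - |12|_M$, independently of which class is labelled $X_1$.

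The core step is the inequality $d_{GH}(V, W) \le d_{GH}(X, Y)$, where $W = D_{M,N,\e}(Y)$. Since $\e \le s(M)/4$, Proposition~\ref{prop:optimalRelationPartition} applies: any $R \in \cR_\opt(X, Y)$ decomposes as $R = R_1 \sqcup R_2$ with $R_i \in \cR(X_i, Y_{\psi(i)})$ for some bijection $\psi : M \to M$. Reinterpreting this same set $R$ as a relation in $\cR(V, W)$, each inner distortion $\dis R_i$ is preserved, because the metric on $X_i$ and on $Y_{\psi(i)}$ is unchanged; and for any cross pair $(x_1, y_1) \in R_1$, $(x_2, y_2) \in R_2$ both $|x_1 x_2|$ and $|y_1 y_2|$ shift by the single constant $|12|_N - |12|_M$, so their difference is preserved. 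Hence $\dis R$ does not change, and Proposition~\ref{th:GH-metri-and-relations} gives $2 d_{GH}(V, W) \le \dis R = 2 d_{GH}(X, Y)$.

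For the reverse inequality I run the same construction with the roles of $M$ and $N$ swapped, producing $D_{N,M,\e} : U_\e(N) \to U_\e(M)$. The one non-automatic verification is that $D_{N,M,\e}$ is the two-sided inverse of $D_{M,N,\e}$, i.e., that the canonical partition of $V$ with respect to $N$ coincides with $\{X_1, X_2\}$. This holds because the diameters of the $X_i$ remain below $\e$ in $V$, and the between-class distance in $V$ is at least $s(N) - \e$, which under the standing hypothesis on $\e$ is large enough for Proposition~\ref{prop:canon_part} to force the same grouping. Once this is checked, the explicit formula for $\r$ is visibly involutive, so re-applying the distortion estimate in the opposite direction yields $d_{GH}(X, Y) \le d_{GH}(V, W)$, and the two bounds together give the isometry. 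I expect the only real obstacle to be bookkeeping: matching up the thresholds on $\e$ so that the forward construction, its inverse, and Proposition~\ref{prop:optimalRelationPartition} are all simultaneously available.
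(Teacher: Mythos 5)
Your proposal follows the paper's own argument essentially step for step: the identity map for $n=1$; well-definedness of $V$ for $n=2$ because both numerations of the canonical partition shift the single cross-distance by the same constant $|12|_N-|12|_M$; the decomposition $R=R_1\sqcup R_2$ from Proposition~\ref{prop:optimalRelationPartition} to show the distortion of $R$ is unchanged when reread in $\cR(V,W)$, giving $d_{GH}(V,W)\le d_{GH}(X,Y)$; and symmetrization in $M$ and $N$ to produce the inverse map and the reverse inequality. The only divergence is that you actually verify that $D_{N,M,\e}$ inverts $D_{M,N,\e}$ (the paper dismisses this as obvious), and the threshold bookkeeping you flag at the end is exactly what the paper settles by imposing $\e\le\min\{s(M)/4,s(N)/4\}$ before stating the proposition.
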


\subsection{The Case of Spaces Consisting of Three or More Points}

The next proposition guarantees, under special assumptions, uniqueness of numeration of canonical partition.

\begin{prop}\label{prop:CanonPartGeneralSpace}
Let $M=\{1,\ldots,n\}$ be a metric space such that $n\ge3$ and $e(M)>0$. Choose an arbitrary $0<\e\le\frac12\min\{s(M),e(M)\}$, any $X\in\cM$, $2d_{GH}(M,X)<\e$, and let $\{X_i\}$ and $\{X'_i\}$ be the canonical partitions of $X$ w.r.t. $M$. Then $X_i=X'_i$ for each $i$.
\end{prop}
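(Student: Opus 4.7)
The plan is to use Proposition~\ref{prop:canon_part}, which already says that two canonical partitions of $X$ with respect to $M$ coincide \emph{up to numeration}: hence there is a permutation $\pi$ of $\{1,\ldots,n\}$ such that $X'_i = X_{\pi(i)}$ for every $i\in M$. It therefore suffices to show that $\pi$ is the identity.

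Argue by contradiction. Suppose $\pi\ne\mathrm{id}$ and pick $i\in M$ with $j:=\pi(i)\ne i$. Since $n\ge 3$, one can choose $k\in M\setminus\{i,j\}$. Take arbitrary $x\in X_{\pi(i)} = X'_i$ and $x'\in X_{\pi(k)} = X'_k$ (these blocks are nonempty by definition of a partition). Applying item~(\ref{prop:canon_part:2}) of Proposition~\ref{prop:canon_part} to the partition $\{X_m\}$ with indices $\pi(i)$ and $\pi(k)$, and to the partition $\{X'_m\}$ with indices $i$ and $k$, one obtains
\begin{equation*}
\bigl||xx'|-|\pi(i)\,\pi(k)|\bigr|<\e \qquad\text{and}\qquad \bigl||xx'|-|ik|\bigr|<\e,
\end{equation*}
whence, by the triangle inequality, $\bigl||ik|-|j\,\pi(k)|\bigr|<2\e$.

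It remains to verify that $\{i,k\}$ and $\{j,\pi(k)\}$ are two \emph{different} unordered pairs of \emph{distinct} points of $M$. Distinctness within each pair is clear: $i\ne k$ by the choice of $k$, and $j=\pi(i)\ne\pi(k)$ since $\pi$ is a bijection and $i\ne k$. Moreover, if the two pairs coincided, then $j$ would have to lie in $\{i,k\}$, which is ruled out by $j\ne i$ and $j\ne k$. Hence the definition of $e(M)$ yields $\bigl||ik|-|j\,\pi(k)|\bigr|\ge e(M)\ge 2\e$, contradicting the strict inequality just obtained.

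The only delicate point of the argument — and the place where the hypothesis $n\ge 3$ enters essentially — is the combinatorial choice of $k$: we must produce two different unordered pairs of distinct indices in $M$ whose $M$-distances the hypothetical nontrivial $\pi$ would force to be within $2\e$ of each other. For $n=2$ no such $k$ exists, in full agreement with Remark~\ref{rk:nonuniqueness_canonic_part}.
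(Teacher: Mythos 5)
Your argument is correct and is essentially the paper's own proof: both extract the permutation relating the two partitions from Proposition~\ref{prop:canon_part}, use $n\ge3$ to pick a third index $k$ outside $\{i,\pi(i)\}$, and derive a contradiction with $e(M)\ge2\e$ by comparing the two distance estimates from item~(\ref{prop:canon_part:2}) for a pair of points lying in a common block of both partitions. The verification that the two unordered pairs are distinct and consist of distinct points matches the paper's reasoning (the paper phrases it via the inverse permutation, which is an immaterial difference).
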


\begin{proof}
By Proposition~\ref{prop:canon_part}, the partitions $\{X_i\}$ and $\{X'_i\}$ differ from each other by a numeration, i.e., there exists a permutation $\v\:M\to M$ such that $X_i=X'_{\v(i)}$. We have to show that the permutation $\v$ is identical.

Suppose otherwise, i.e., that for some $j$ it holds $X_j=X'_i$, $i\ne j$. Since $n>0$, then there exists $k\not\in\{i,j\}$. Put $X'_p=X_k$. Since $k\ne j$, then $p\ne i$. Besides that, $i\ne j$ and $i\ne k$, thus $\{j,k\}$ and $\{i,p\}$ are distinct pairs, therefore, $\bigl||jk|-|ip|\bigr|\ge e(M)\ge\min\bigl\{s(M),e(M)\bigr\}\ge2\e$. But for any $x_j\in X_j=X'_i$ and $x_k\in X_k=X'_p$ it holds $\bigl||x_jx_k|-|jk|\bigr|<\e$ and $\bigl||x_jx_k|-|ip|\bigr|<\e$, whence it follows that
$$
\bigl||jk|-|ip|\bigr|=\bigl||jk|-|x_jx_k|+|x_jx_k|-|ip|\bigr|\le \bigl||x_jx_k|-|jk|\bigr|+\bigl||x_jx_k|-|ip|\bigr|<2\e,
$$
a contradiction.
\end{proof}

The next result can be proved similarly with Proposition~\ref{prop:CanonPartGeneralSpace}.

\begin{prop}\label{prop:correspondence-partition}
Let $M=\{1,\ldots,n\}$ be a metric space, $n\ge 3$, $e(M)>0$. Choose an arbitrary $0<\e\le\frac14\min\bigl\{s(M),\,e(M)\bigr\}$, any $X,Y\in\cM$, $2d_{GH}(M,X)<\e$, $2d_{GH}(M,Y)<\e$, and let $\{X_i\}$ and $\{Y_i\}$ denote the canonical partitions of $X$ and $Y$, respectively, w.r.t. $M$. Then for each $R\in\cR_\opt(X,Y)$ there exist $R_i\in\cR(X_i,Y_i)$ such that $R=\sqcup_{i=1}^nR_i$.
\end{prop}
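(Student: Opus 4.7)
The plan is to reduce to Proposition~\ref{prop:optimalRelationPartition}, which already gives a decomposition $R=\sqcup R_i$ with $R_i\in\cR(X_i,Y_{\psi(i)})$ for some bijection $\psi\:M\to M$ (this applies since $\e\le s(M)/4$), and then to show that $\psi$ must be the identity by a pigeonhole/distortion argument of exactly the same flavor as the proof of Proposition~\ref{prop:CanonPartGeneralSpace}.

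First I would note that from $2d_{GH}(M,X)<\e$ and $2d_{GH}(M,Y)<\e$ together with the triangle inequality, one gets $\dis R=2d_{GH}(X,Y)<2\e$; this is the ``small distortion'' that will conflict with the $e(M)$-separation of pairwise distances in $M$. I would then argue by contradiction: suppose $\psi\ne\mathrm{id}$, so that there is some $j$ with $\psi(j)=i\ne j$. Using $n\ge 3$, pick $k\in M\sm\{i,j\}$ and set $p=\psi(k)$; since $\psi$ is a bijection and $k\ne j$, we have $p\ne i$. A quick check shows $\{j,k\}\ne\{i,p\}$ as unordered pairs (the only ways they could coincide are $j=i$ or $k=i$, both excluded). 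Thus both pairs consist of distinct elements and are distinct from each other, so the defining property of $e(M)$ gives
$$
\bigl||jk|-|ip|\bigr|\ge e(M)\ge 4\e.
$$

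Next I would pick witnesses $(x_j,y_j')\in R_j$ and $(x_k,y_k')\in R_k$, so that $x_j\in X_j$, $x_k\in X_k$, $y_j'\in Y_i$, $y_k'\in Y_p$. Applying item~(\ref{prop:canon_part:2}) of Proposition~\ref{prop:canon_part} to the canonical partitions $\{X_i\}$ of $X$ (with indices $j,k$) and $\{Y_i\}$ of $Y$ (with indices $i,p$), I get
$$
\bigl||x_jx_k|-|jk|\bigr|<\e\quad\text{and}\quad\bigl||y_j'y_k'|-|ip|\bigr|<\e.
$$
The triangle inequality for absolute values then forces
$$
\bigl||x_jx_k|-|y_j'y_k'|\bigr|\ge\bigl||jk|-|ip|\bigr|-2\e\ge 4\e-2\e=2\e,
$$
contradicting $\dis R<2\e$. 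Hence $\psi=\mathrm{id}$ and the decomposition $R=\sqcup_{i=1}^nR_i$ has $R_i\in\cR(X_i,Y_i)$, as required.

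The main (and only real) delicate step is the combinatorial verification that $\{j,k\}\ne\{i,p\}$ so that the $e(M)$-bound is actually applicable; aside from that, this is a straight adaptation of the previous proposition with $Y$ playing the role that $X$ played there, the estimate $4\e$ (rather than $2\e$) being exactly what is needed to absorb the two ``$<\e$'' errors coming from the canonical partitions of $X$ and $Y$ instead of just one.
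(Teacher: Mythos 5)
Your proposal is correct and follows essentially the same route as the paper: reduce to Proposition~\ref{prop:optimalRelationPartition}, then rule out a nontrivial $\psi$ by choosing $j,k,p$ with $\{j,k\}\ne\{i,p\}$ and playing the $e(M)\ge4\e$ separation against the $<\e$ errors from the canonical partitions and the bound $\dis R<2\e$. The only cosmetic difference is that you rearrange the final inequality chain to contradict $\dis R<2\e$ directly, whereas the paper assembles the same three estimates into the contradiction $\bigl||jk|-|ip|\bigr|<4\e$; you also correctly invoke $e(M)$ where the paper's text has an apparent typo ($s(M)$).
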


\begin{proof}
By Proposition~\ref{prop:optimalRelationPartition}, there exist a bijection $\psi\:M\to M$ and $R_i\in\cR(X_i,Y_{\psi(i)})$ such that $R=\sqcup_{i=1}^nR_i$. We have to prove that $\psi$ is identical.

Suppose otherwise, i.e., that for some $j$ it holds $\psi(j)=i$, $i\ne j$. Since $n>0$, then there exists $k\not\in\{i,j\}$. Put $p=\psi(k)$. Since $k\ne j$, then $p\ne i$. Besides that, $i\ne j$ and $i\ne k$, so $\{j,k\}$ and $\{i,p\}$ are distinct pairs, therefore, $\bigl||jk|-|ip|\bigr|\ge s(M)\ge\min\bigl\{s(M),e(M)\bigr\}\ge4\e$. But for any $x_j\in X_j$, $x_k\in X_k$,  $y_i\in R(x_j)\ss Y_i$, $y_p\in R(x_k)\ss Y_p$ we have $\bigl||y_iy_p|-|ip|\bigr|<\e$, $\bigl||x_jx_k|-|jk|\bigr|<\e$, $\bigl||x_jx_k|-|y_iy_p|\bigr|<2\e$, where the latter inequality holds because $(x_j,y_i),\,(x_k,y_p)\in R$ and $\dis R=2d_{GH}(X,Y)\le2d_{GH}(X,M)+2d_{GH}(M,Y)<2\e$. Thus, we get the following estimation:
\begin{multline*}
\bigl||jk|-|ip|\bigr|=\bigl||jk|-|x_jx_k|+|x_jx_k|-|y_iy_p|+|y_iy_p|-|ip|\bigr|\le\\ \le \bigl||jk|-|x_jx_k|\bigr|+\bigl||x_jx_k|-|y_iy_p|\bigr|+\bigl||y_iy_p|-|ip|\bigr|<4\e,
\end{multline*}
a contradiction.
\end{proof}

\begin{prop}\label{prop:unique-map}
Under the notations of Proposition~$\ref{prop:metric-change}$, suppose additionally that $e(M)>0$ and $0<\e\le\min\{s(M)/2,e(M)/2,2s(N)/3,t(N)/3\}$, then the metric $\r$ and the space $V$ are uniquely defined.
\end{prop}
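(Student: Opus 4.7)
The plan is to reduce the uniqueness of $\r$ to the uniqueness of the \emph{labeled} canonical partition of $X$ with respect to $M$. Inspecting the definition in Proposition~\ref{prop:metric-change}, the value $\r(x,x')=|xx'|-|ij|_M+|ij|_N$ depends on the pair $(x,x')$ only through the indices $i,j$ of the blocks $X_i,X_j$ containing them; the distance $|xx'|$ itself is intrinsic to $X$, and $|ij|_M$, $|ij|_N$ depend only on the labeling. Hence, once the ordered partition $\{X_i\}_{i=1}^n$ is pinned down, $\r$ is determined pointwise on $X\x X$, and therefore so is the metric space $V=(X,\r)$.

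The only remaining step is to check that Proposition~\ref{prop:CanonPartGeneralSpace} applies under the present hypotheses, so that the labeled canonical partition really is unique. The assumed bound $0<\e\le\min\{s(M)/2,\,e(M)/2,\,2s(N)/3,\,t(N)/3\}$ implies in particular $\e\le\tfrac12\min\{s(M),e(M)\}$, which is precisely the restriction required there, and we are in the regime $n\ge 3$ with $e(M)>0$. Invoking Proposition~\ref{prop:CanonPartGeneralSpace} yields $X_i=X'_i$ for any two canonical partitions $\{X_i\}$ and $\{X'_i\}$ of $X$ w.r.t.\ $M$, i.e., the numeration of the partition is rigid.

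I do not foresee a genuine obstacle: the substance of the claim is already contained in Proposition~\ref{prop:CanonPartGeneralSpace}, and the present statement is essentially a packaging of that rigidity result in the form most useful for the construction of $\r$ and $V$. If anything, the only bookkeeping point is verifying that the refined $\e$-bound here simultaneously satisfies the hypotheses of both Proposition~\ref{prop:metric-change} and Proposition~\ref{prop:CanonPartGeneralSpace}, which is immediate from taking the minimum of the two separate bounds.
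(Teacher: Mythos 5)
Your proposal is correct and follows essentially the same route as the paper: the paper's proof is a one-line reduction to Proposition~\ref{prop:CanonPartGeneralSpace}, and you supply the same reduction together with the (routine) verification that the stated $\e$-bound implies $\e\le\frac12\min\{s(M),e(M)\}$ and the observation that the labeled partition determines $\r$ pointwise. No gaps.
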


\begin{proof}
This follows from Proposition~\ref{prop:CanonPartGeneralSpace} which guarantees that the canonical partition of the space $X$ is uniquely defined.
\end{proof}

\subsection{The Main Results}

\begin{notation}
In Proposition~\ref{prop:unique-map}, for the spaces $M=\{1,\ldots,n\}$, $N=\{1,\ldots,n\}$, $n\ge 3$, and sufficiently small $\e>0$, we have constructed a mapping from $U_\e(M)\ss\cM$ to $U_\e(N)\ss\cM$ which takes each space $X\in U_\e(M)$ to the space obtained from $X$ by the following replace of its metric: if $\{X_i\}$ is the canonical partition of the space $X$, and $x\in X_i$, $x'\in X_j$, then the distance $|xx'|$ increases by $|ij|_N-|ij|_M$. Similarly with the case $n\le2$, we denote the obtained mapping by $D_{M,N,\e}$.
\end{notation}

\begin{thm}\label{thm:local_isometry_gen_pos}
Let $M=\{1,\ldots,n\}$ and $N=\{1,\ldots,n\}$ be general position metric spaces, and $0<\e\le\min\{s(M)/4,e(M)/4,t(M)/3,s(N)/4,e(N)/4,t(N)/3\}$. Then the mapping $D_{M,N,\e}\:U_\e(M)\to U_\e(N)$ is an isometry.
\end{thm}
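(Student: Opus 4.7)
The plan mirrors the two-point case treated in Proposition~\ref{prop:local_isometry_gen_pos_small}, now leveraging the uniqueness results for canonical partitions and optimal correspondences available once $n\ge3$. First, I would confirm that $D_{M,N,\e}$ is well defined as a map $U_\e(M)\to U_\e(N)$. The chosen bound on $\e$ satisfies the hypotheses of Proposition~\ref{prop:metric-change} in both orderings $(M,N)$ and $(N,M)$, so $V=D_{M,N,\e}(X)\in\cM$ exists with $d_{GH}(N,V)\le d_{GH}(M,X)<\e/2$, hence $V\in U_{\e/2}(N)\ss U_\e(N)$. Uniqueness of $V$---independence from the numeration of the canonical partition of $X$---is supplied by Proposition~\ref{prop:unique-map}, and the analogous map $D_{N,M,\e}$ is well defined by the same reasoning with $M$ and $N$ swapped.

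Second, I would verify that $D_{N,M,\e}\circ D_{M,N,\e}$ is the identity on $U_\e(M)$. The key observation is that if $\{X_i\}$ is the canonical partition of $X$ with respect to $M$, then the same family $\{X_i\}$ is a canonical partition of $V$ with respect to $N$: within each block the metric is unchanged, so $\diam_V X_i=\diam_X X_i<\e$, and between blocks the new distances satisfy $\bigl||vv'|_V-|ij|_N\bigr|=\bigl||xx'|_X-|ij|_M\bigr|<\e$ by Proposition~\ref{prop:canon_part}. Uniqueness of the canonical partition (Proposition~\ref{prop:CanonPartGeneralSpace}) then promotes this to \emph{the} canonical partition of $V$ w.r.t.~$N$. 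Applying $D_{N,M,\e}$ now shifts between-block distances by $|ij|_M-|ij|_N$, which undoes the original shift and recovers $X$ exactly; the symmetric argument shows $D_{M,N,\e}\circ D_{N,M,\e}$ is the identity on $U_\e(N)$, so $D_{M,N,\e}$ is a bijection with inverse $D_{N,M,\e}$.

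Third, for the isometry property, fix $X,Y\in U_\e(M)$ with images $V,W$ and choose any $R\in\cR_\opt(X,Y)$. Proposition~\ref{prop:correspondence-partition} decomposes $R=\sqcup_{i=1}^n R_i$ with $R_i\in\cR(X_i,Y_i)$. Reinterpreted as a correspondence in $\cR(V,W)$, the distortion of $R$ is unchanged: for $(x,y),(x',y')\in R$ with $x\in X_i$, $y\in Y_i$, $x'\in X_j$, $y'\in Y_j$ and $i\ne j$, both $|xx'|$ and $|yy'|$ are shifted by the common constant $|ij|_N-|ij|_M$ when passing to $V$ and $W$, so the difference $\bigl||xx'|-|yy'|\bigr|$ is preserved; for $i=j$ the relevant distances are unchanged outright. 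Therefore $2d_{GH}(V,W)\le\dis R=2d_{GH}(X,Y)$, and the reverse inequality follows by applying the identical argument to an optimal correspondence between $V$ and $W$, using Proposition~\ref{prop:correspondence-partition} in the $N$-ordering together with the fact that the canonical partitions of $V,W$ w.r.t.~$N$ coincide with $\{X_i\},\{Y_i\}$ as set partitions.

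The main obstacle is essentially bookkeeping: one must verify that the single symmetric bound $\e\le\min\{s(M)/4,e(M)/4,t(M)/3,s(N)/4,e(N)/4,t(N)/3\}$ simultaneously satisfies the hypotheses of Propositions~\ref{prop:canon_part}, \ref{prop:metric-change}, \ref{prop:CanonPartGeneralSpace}, \ref{prop:correspondence-partition}, and \ref{prop:unique-map} with both orderings of $(M,N)$. Conceptually, once uniqueness of the canonical partition is secured, the entire argument reduces to the observation that $D_{M,N,\e}$ shifts the distance between $X_i$ and $X_j$ by a constant depending only on the pair $(i,j)$, which preserves the distortion of any correspondence respecting the partition; Proposition~\ref{prop:correspondence-partition} is precisely what guarantees that every optimal correspondence respects it.
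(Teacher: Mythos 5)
Your proposal is correct and follows essentially the same route as the paper: decompose an optimal correspondence via Proposition~\ref{prop:correspondence-partition}, observe that the distance shifts between blocks are constants depending only on $(i,j)$ so distortion is preserved, and reverse the roles of $M$ and $N$ for the opposite inequality. Your explicit verification that $\{X_i\}$ remains the canonical partition of $V$ with respect to $N$ (so that $D_{N,M,\e}$ genuinely inverts $D_{M,N,\e}$) spells out a detail the paper dispatches with an appeal to symmetry, but it is the same argument.
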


\begin{proof}
Notice that for $n\le 2$ the condition of the theorem coincides with the one of Proposition~\ref{prop:local_isometry_gen_pos_small}, so our theorem is proved in this case. Now, let $n\ge3$. The subsequent reasoning is very similar with the one from subsection~\ref{subsec:small}.

Choose arbitrary $X,\,Y\in U_\e(M)$ and put $V=D_{M,N,\e}(X)$, $W=D_{M,N,\e}(Y)$. Let $\{X_i\}$ and $\{Y_i\}$ be the canonical partitions of $X$ and $Y$, respectively, w.r.t. $M$, and let $R\in\cR_\opt(X,Y)$. By Proposition~\ref{prop:correspondence-partition}, there exist $R_i\in\cR(X_i,Y_i)$ such that $R=\sqcup R_i$. Therefore,
$$
2d_{GH}(X,Y)=\max_{i,j,k}\{\dis R_k,\,\bigl||x_ix_j|-|y_iy_j|\bigr|:(x_i,y_i)\in R_i,\,(x_j,y_j)\in R_j,\,i\ne j\}.
$$
Since $V$ and $W$ differ from $X$ and $Y$ by the distance functions only, then $R$ can be considered as an element from $\cR(V,W)$. Since the transition to $V$ and $W$ preserves the metrics of $X_i$ and $Y_i$, the $\dis R_i$ remains fixed as well. Besides that, for each $i\ne j$ the distances $|x_ix_j|$ and $|y_iy_j|$ were changed by the same value $|ij|_N-|ij|_M$, so the values $\bigl||x_ix_j|-|y_iy_j|\bigr|$ remain fixed. This implies that for the $R$ considered as an element from $\cR(V,W)$, its distortion is the same as of the initial $R$, therefore, $d_{GH}(V,W)\le d_{GH}(X,Y)$.

It remains to notice that, by the symmetry of assertions imposed on $M$, $N$, and $\e$, all the above constructions can be reversed, that generates the mappings $D_{N,M,\e}\:U_\e(N)\to U_\e(M)$ inverse to $D_{M,N,\e}$. Thus, $d_{GH}(X,Y)\le d_{GH}(V,W)$, that completes the proof.
\end{proof}

Notice that for $n\ge3$ the mapping $D_{M,N,\e}$ depends essentially on the numerations of the points from the spaces $M$ and $N$. In particulary, if we take as $N$ the space $M$ with some other numerations of its points, then we obtain nontrivial and distinct from each other isometries of $U_\e(M)$ onto itself. Indeed, if we choose in $U_\e(M)$ a space $X$ such that the components of its canonical partition consist of different numbers of points, say $\#X_i=i$, then the images of such $X$ under the mappings $D_{M,N,\e}$ are not isometric to each other because no one isometry of $X$ can nontrivially permute the $\{X_i\}$.

Let $S_n$ be the permutation group of the set $\{1,\ldots,n\}$, then for each $\t\in S_n$ we denote by $M^\t$ the metric space $M$ for which its point $i$ has the number $\t(i)$. The above discussion leads to the following conclusion.

\begin{cor}
Let $M=\{1,\ldots,n\}$, $n\ge3$, be a metric space in general position. Then for sufficiently small $\e>0$ and any $\t,\s\in S_n$, $\t\ne\s$, the isometries $D_{M,M^\t,\e}$ and $D_{M,M^\s,\e}$ of the ball $U_\e(M)$ are distinct. Thus, the isometry group of the round neighborhood $U_\e(M)$ contains a subgroup isomorphic to the permutation group $S_n$.
\end{cor}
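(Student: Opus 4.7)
The plan is to reduce the corollary to Theorem~\ref{thm:local_isometry_gen_pos} together with the uniqueness statements for the canonical partition, and to exploit a test space whose canonical-partition cluster sizes are pairwise distinct. Since the invariants $s$, $e$, $t$ depend only on the abstract isometry class of $M$, a single $\e>0$ satisfies the hypotheses of Theorem~\ref{thm:local_isometry_gen_pos} for every pair $(M,M^\t)$ with $\t\in S_n$; and because $M^\t$ is isometric to $M$, we have $U_\e(M^\t)=U_\e(M)$ inside $\cM$, so each $D_{M,M^\t,\e}$ is a self-isometry of $U_\e(M)$.

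For distinctness I fix a test space $X\in U_\e(M)$ whose canonical partition $\{X_i\}_{i=1}^n$ with respect to $M$ has pairwise distinct cardinalities, for instance $\#X_i=i$; such an $X$ is obtained by clustering $i$ very close points near the point of $M$ labeled $i$. The image $V:=D_{M,M^\t,\e}(X)$ has the same underlying set-partition $\{X_i\}$, but its inter-cluster distances now approximate $|ij|_{M^\t}=|\t^{-1}(i)\t^{-1}(j)|_M$; by Proposition~\ref{prop:CanonPartGeneralSpace} together with the general-position property of $M$ (which lets us read off the labels from the cluster-to-cluster distances), the $M$-canonical partition of $V$ therefore labels the cluster $X_{\t(k)}$ with the index $k$. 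The same computation gives that the $M$-canonical partition of $W:=D_{M,M^\s,\e}(X)$ labels $X_{\s(k)}$ with the index $k$. Now suppose $V$ and $W$ are isometric, and let $f\:V\to W$ be any isometry; its graph is an optimal correspondence (with zero distortion) between $V$ and $W$ viewed as elements of $U_\e(M)$, so Proposition~\ref{prop:correspondence-partition} forces $f$ to send the $k$-th cluster of the $M$-canonical partition of $V$ to that of $W$, i.e., $f(X_{\t(k)})=X_{\s(k)}$. Taking cardinalities and using $\#X_i=i$ yields $\t(k)=\s(k)$ for every $k$, hence $\t=\s$.

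For the subgroup claim I would verify by a short direct computation the composition law $D_{M,M^\rho,\e}\circ D_{M,M^\s,\e}=D_{M,M^{\s\rho},\e}$: after the first factor the $M$-canonical partition of $D_{M,M^\s,\e}(X)$ relabels the original cluster $X_a$ by $\s^{-1}(a)$, and then the second factor installs the compound shift whose effect on the distance between $X_a$ and $X_b$ is $-|ab|_M+|(\s\rho)^{-1}(a)(\s\rho)^{-1}(b)|_M$, exactly matching $D_{M,M^{\s\rho},\e}$. Together with $D_{M,M,\e}=\mathrm{id}$ and the distinctness above, this makes $\t\mapsto D_{M,M^{\t^{-1}}\!,\e}$ an injective homomorphism $S_n\hookrightarrow\mathrm{Isom}\bigl(U_\e(M)\bigr)$, producing the required subgroup. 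The main delicate point I anticipate is the identification of the $M$-canonical partition of $V$ (and of $W$) as described above: this relies on Proposition~\ref{prop:CanonPartGeneralSpace} for uniqueness, and it forces $V$ and $W$ themselves to lie in $U_\e(M)$, which in turn constrains how small $\e$ must be chosen.
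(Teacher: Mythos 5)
Your proposal is correct and follows essentially the paper's own argument: the paper likewise distinguishes the maps $D_{M,M^\tau,\varepsilon}$ by evaluating them on a test space $X\in U_\varepsilon(M)$ whose canonical partition has clusters of pairwise distinct cardinalities ($\#X_i=i$) and observing that an isometry between the images would have to match up the $M$-canonical clusters, forcing $\tau=\sigma$. The one point where you go beyond the paper is the explicit verification of the composition law $D_{M,M^\rho,\varepsilon}\circ D_{M,M^\sigma,\varepsilon}=D_{M,M^{\sigma\rho},\varepsilon}$ (and hence that $\tau\mapsto D_{M,M^{\tau^{-1}},\varepsilon}$ is an injective homomorphism), which the paper leaves implicit but which is genuinely needed to pass from ``$n!$ distinct isometries'' to ``a subgroup isomorphic to $S_n$.''
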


Since, by Proposition~\ref{prop:elem_prop}, multiplication by $\l>0$ of all distances of a compact metric space is a homothety of the Gromov--Hausdorff space, centered at one-point metric space $\D_1$, and the restrictions imposed on $\e$ in Theorem~\ref{thm:local_isometry_gen_pos} are $1$-homogeneous by $\l$, then each isometry $D_{M,N,\e}\:U_\e(M)\to U_\e(N)$ generates an isometry $D_{\l M,\l N,\l\e}\:U_{\l\e}(\l M)\to U_{\l\e}(\l N)$ and, as a corollary, the one of the cones $CU_\e(M)$ and $CU_\e(N)$ (on $\D_1$ we extend this mapping by continuity).

\begin{cor}
Let $M=\{1,\ldots,n\}$ and $N=\{1,\ldots,n\}$ be metric spaces of general position, $n\ge3$, then for sufficiently small $\e>0$ there exists an isometry of the cones $CU_\e(M)$ and $CU_\e(N)$ fixed at $\D_1$.
\end{cor}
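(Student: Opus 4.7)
The plan is to extend the local isometry $D_{M,N,\e}$ of Theorem~\ref{thm:local_isometry_gen_pos} radially over the whole cone, using the homothety structure of $\cM$. By Proposition~\ref{prop:elem_prop}, for each $\l>0$ the scaling $\Phi_\l\:Z\mapsto\l Z$ is a $\l$-homothety of $\cM$ with unique fixed point $\D_1$, and it maps $U_\e(M)$ bijectively onto $U_{\l\e}(\l M)$. The quantities $s,e,t$ controlling the radius hypothesis in Theorem~\ref{thm:local_isometry_gen_pos} are all $1$-homogeneous under scaling, so the theorem applied to $(\l M,\l N,\l\e)$ for every $\l>0$ yields a family of isometries
\[
\varphi_\l:=D_{\l M,\l N,\l\e}\:U_{\l\e}(\l M)\to U_{\l\e}(\l N),
\]
and a direct check from the construction of $D$ in Proposition~\ref{prop:metric-change} shows $\varphi_\l=\Phi_\l\circ D_{M,N,\e}\circ\Phi_{1/\l}$.

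Since $CU_\e(M)\setminus\{\D_1\}=\bigcup_{\l>0}\l U_\e(M)=\bigcup_{\l>0}U_{\l\e}(\l M)$, and analogously for $N$, I would assemble the $\varphi_\l$ into a single map $F\:CU_\e(M)\to CU_\e(N)$ by setting $F(\D_1)=\D_1$ and $F(Z)=\varphi_\l(Z)$ for $Z\in U_{\l\e}(\l M)$. On each piece the isometry property is immediate from Theorem~\ref{thm:local_isometry_gen_pos} applied at the scaled parameters. To compare $d_{GH}(F(Z_1),F(Z_2))$ with $d_{GH}(Z_1,Z_2)$ for $Z_1,Z_2$ in different pieces, I would use the scaling-equivariance of the $\varphi_\l$ together with $d_{GH}(\mu A,\mu B)=\mu d_{GH}(A,B)$ to rescale both inputs to a common parameter. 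Continuity of $F$ at $\D_1$ follows from $d_{GH}(Z,\D_1)=\tfrac12\diam Z$ and the fact that $F$ preserves small diameters: as $\l\to0$, both $Z\in U_{\l\e}(\l M)$ and $F(Z)\in U_{\l\e}(\l N)$ have diameter tending to $0$.

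\textbf{Main obstacle.} The delicate point is the well-definedness of $F$ on the overlap regions $U_{\l\e}(\l M)\cap U_{\l'\e}(\l'M)$: one must check $\varphi_\l(Z)=\varphi_{\l'}(Z)$ there. Via the scaling-equivariance identity this reduces to showing $D_{M,N,\e}(tX)=tD_{M,N,\e}(X)$ in $\cM$ whenever $X,tX\in U_\e(M)$, i.e., that $D_{M,N,\e}$ commutes with scaling on the overlap region. Since the distance adjustment $-|ij|_M+|ij|_N$ in the construction of $D$ is affine rather than homogeneous in $t$, this compatibility does not hold literally as equality of metric structures, and the technical core of the argument will be either to establish it up to isometry (exploiting that the overlap condition forces $t$ close to $1$, with the case $t=1$ being trivial), or to bypass the ambiguity by fixing a canonical scale $\l(Z)$ for each $Z$ in the cone (such as $\l(Z)=\diam Z/\diam M$, combined with a suitable shrinkage of $\e$) so that $F(Z)$ is defined unambiguously.
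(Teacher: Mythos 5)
Your plan follows the paper's own (very brief) justification of this corollary: rescale the local isometry of Theorem~\ref{thm:local_isometry_gen_pos} by every $\l>0$ and glue the resulting maps $\varphi_\l=D_{\l M,\l N,\l\e}$ over the decomposition $CU_\e(M)\sm\{\D_1\}=\bigcup_{\l>0}U_{\l\e}(\l M)$. You are right to single out well-definedness on the overlaps as the crux, and the difficulty is genuine: the overlaps are nonempty (for instance $tM\in U_\e(M)\cap U_{t\e}(tM)$ whenever $|t-1|\diam M<2\e$), and on them the maps really do disagree. Concretely, $\varphi_t(tM)=tN$ has distances $t|ij|_N$, while $\varphi_1(tM)=D_{M,N,\e}(tM)$ has distances $(t-1)|ij|_M+|ij|_N$; these differ by $(t-1)(|ij|_M-|ij|_N)$, and since all distances of $N$ are pairwise distinct, for $t\ne1$ close to $1$ and $M\ne N$ the two spaces are not even isometric. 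Hence $D_{M,N,\e}(tX)\ne tD_{M,N,\e}(X)$ in $\cM$, and your first proposed remedy (compatibility ``up to isometry'' for $t$ near $1$) fails outright.

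Your second remedy --- fixing a canonical scale such as $\l(Z)=\diam Z/\diam M$ --- does make the map well defined, but it destroys the isometry property: it forces $F(\l M)=\varphi_\l(\l M)=\l N$ for every $\l$, whence $d_{GH}\bigl(F(M),F(tM)\bigr)=\frac12|t-1|\diam N$, which differs from $d_{GH}(M,tM)=\frac12|t-1|\diam M$ whenever $\diam M\ne\diam N$; more generally, transporting an optimal correspondence $R=\sqcup R_i$ perturbs the cross-block differences by $(\l(Z)-\l(W))(|ij|_N-|ij|_M)$, giving only a bi-Lipschitz estimate rather than equality of distances. So neither route closes the gap, and as it stands the proposal does not prove the corollary. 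To be fair, the paper itself offers nothing beyond the one-sentence assertion that the maps $D_{\l M,\l N,\l\e}$ ``generate'' an isometry of the cones --- precisely the gluing you show to be problematic --- so your analysis exposes a step the paper leaves unjustified; but a complete proof would require an idea (or a reformulation of the statement) that is present in neither text.
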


\end{document}